\documentclass[11pt,a4paper]{article}%
\usepackage{makeidx}
\usepackage{amsfonts}
\usepackage{amsmath}
\usepackage{amssymb}
\usepackage{graphicx}
\usepackage{palatino}
\usepackage{extarrows}
\usepackage[colorlinks]{hyperref}
\usepackage{mathtools}%
\setcounter{MaxMatrixCols}{30}
\providecommand{\U}[1]{\protect\rule{.1in}{.1in}}
\hypersetup{colorlinks=true, linkcolor=cyan, citecolor=green, filecolor=black, urlcolor=blue}
\newtheorem{theorem}{Theorem}

\newtheorem{corollary}[theorem]{Corollary}

\newtheorem{definition}[theorem]{Definition}

\newenvironment{proof}[1][Proof]{\noindent\textbf{#1.} }{\ \rule{0.5em}{0.5em}}
\begin{document}

\title{On the maximal monotonicity of subdifferential operators}
\author{Aurel R\u{a}\c{s}canu}
\date{}
\maketitle

\begin{abstract}
We present a simple proof of the maximal monotonicity of the subdifferential
operator in general Banach spaces. Using the Fitzpatrick function the
Rockafellar surjectivity theorem follows as a corollary.

\end{abstract}

\section{Introduction}

In this note, we give a very simple proof of Rockafellar's maximal
monotonicity theorem based on Ekeland's variational principle. The paper is
the result of particular discussions and remarks of Prof. C. Zalinescu on the
simplified proofs of Rockafellar's results: maximal monotonicity theorem and
surjectivity theorem. The proof for maximal monotonicity that we present here
comes from a note on the maximal monotonicity of the subdifferential operator
of the convex l.s.c. function $\Phi:C([0,T];\mathbb{R}^{d})\rightarrow
]-\infty,+\infty]$%
\[
\Phi(x)=\left\{
\begin{array}
[c]{l}%
{\displaystyle\int_{0}^{T}}
\varphi(x(t)dt,\,\,\text{\text{if }}\varphi\left(  x\right)  \in
L^{1}(0,T)\vspace{0.02in}\\
+\infty,\;\quad\quad\quad\text{\text{otherwise}}%
\end{array}
\right.
\]
given by Asiminoaei and R\u{a}\c{s}canu in \cite{AsRa:97}.

Remark that the first proof of maximal monotonicity theorem was given by
Rockafellar in \cite{Rock}. Other different and simplified proofs are given by
S. Simons in \cite{Si} and M. Marques Alves and B. F. Svaiter in \cite{MaSv}.

Let $\left(  \mathbb{X},\left\Vert .\right\Vert \right)  $ be a real Banach
space and $\mathbb{X}^{\ast}$ be its dual. For $x^{\ast}\in\mathbb{X}^{\ast}$
and $x\in\mathbb{X}$ we denote $x^{\ast}\left(  x\right)  $ (the value of
$x^{\ast}$ in $x$) by $\left\langle x,x^{\ast}\right\rangle $ or $\left\langle
x^{\ast},x\right\rangle .$ The space $\mathbb{X\times X}^{\ast}$ is also a
Banach space with the norm $\left\Vert \left(  x,x^{\ast}\right)  \right\Vert
_{\mathbb{X\times X}^{\ast}}=\left(  \left\Vert x\right\Vert ^{2}+\left\Vert
x^{\ast}\right\Vert _{\ast}^{2}\right)  ^{1/2}.$

If $A:\mathbb{X}\rightrightarrows\mathbb{X}^{\ast}$ is a point-to-set operator
(from $\mathbb{X}$ to the family of subsets of $\mathbb{X}^{\ast}$), then
$Dom\left(  A\right)  \xlongequal{\hspace{-4pt}{\rm def}\hspace{-4pt}}\left\{
x\in\mathbb{X}:A\left(  x\right)  \neq\emptyset\right\}  $ and $R\left(
A\right)  \xlongequal{\hspace{-4pt}{\rm def}\hspace{-4pt}}\{x^{\ast}%
\in\mathbb{X}^{\ast}:\exists~x\in Dom\left(  A\right)  $ s.t. $x^{\ast}\in
A\left(  x\right)  \}$. We say that $A$ is proper if $Dom\left(  A\right)
\neq\emptyset.$

We shall use the notation $\left(  x,x^{\ast}\right)  \in A$ for $x\in
Dom\left(  A\right)  $ and $x^{\ast}\in A\left(  x\right)  ;$ this means that
the operator $A$ is identified with its graph%
\[
gr\left(  A\right)  =\{\left(  x,x^{\ast}\right)  \in\mathbb{X\times X}^{\ast
}:x\in Dom\left(  A\right)  ,\;x^{\ast}\in A\left(  x\right)  \}.
\]

The operator $A:\mathbb{X}\rightrightarrows\mathbb{X}^{\ast}$ is monotone
($A\subset\mathbb{X\times X}^{\ast}$ is a monotone set) if%
\[
\left\langle x-y,x^{\ast}-y^{\ast}\right\rangle \geq0,\ \forall\left(
x,x^{\ast}\right)  ,\left(  y,y^{\ast}\right)  \in A.
\]
A monotone operator (set) is maximal monotone if its graph is not properly
contained in the graph of any other monotone operator (set). Hence a monotone
operator is maximal monotone if and only if%
\[
\inf\left\{  \left\langle x-u,x^{\ast}-u^{\ast}\right\rangle :\left(
u,u^{\ast}\right)  \in A\right\}  \geq0\quad\Longrightarrow\quad\left(
x,x^{\ast}\right)  \in A.
\]

Given a function $\varphi:\mathbb{X\rightarrow]-}\infty,+\infty],$ we denote
$Dom\left(  \varphi\right)
\xlongequal{\hspace{-4pt}{\rm def}\hspace{-4pt}}\{x\in\mathbb{X}%
:\varphi\left(  x\right)  <\infty\}.$ We say that $\varphi$ is proper if
$Dom\left(  \varphi\right)  \neq\emptyset.$ The subdifferential $\partial
\varphi:\mathbb{X}\rightrightarrows\mathbb{X}^{\ast}$ is defined by%
\[
\left(  x,x^{\ast}\right)  \in\partial\varphi\quad\text{if}\quad\left\langle
y-x,x^{\ast}\right\rangle +\varphi\left(  x\right)  \leq\varphi\left(
y\right)  ,\ \forall~y\in\mathbb{X}.
\]
Clearly if $\varphi$ is proper and $\left(  x,x^{\ast}\right)  \in
\partial\varphi$ then $\varphi\left(  x\right)  \in\mathbb{R},$ that is $x\in
Dom\left(  \varphi\right)  .$

Remark that if $\varphi:\mathbb{X\rightarrow]-}\infty,+\infty]$ is a proper
convex l.s.c. function and $\psi:\mathbb{X\rightarrow R}$ is a continuous
convex function then $\partial\left(  \varphi+\psi\right)  \left(  x\right)
=\partial\varphi\left(  x\right)  +\partial\psi\left(  x\right)  $ for all
$x\in Dom\left(  \partial\varphi\right)  $ (this result is a consequence of
the Theorem 2.8.7 from \cite{z-book}).

\section{The results}

\begin{theorem}
Let $\mathbb{X}$ be a Banach space and $\varphi:\mathbb{X}\rightarrow
]-\infty,+\infty]$ be a proper convex lower semicontinuous function. Then
$\partial\varphi:\mathbb{X}\rightrightarrows\mathbb{X}^{\ast}$ is proper
maximal monotone operator.
\end{theorem}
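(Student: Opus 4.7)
My plan is to combine Ekeland's variational principle with the subdifferential sum rule already quoted in the introduction. Monotonicity of $\partial\varphi$ is immediate from adding the two subgradient inequalities defining $(x,x^{\ast}),(y,y^{\ast})\in\partial\varphi$, and properness will fall out of the construction below (the first Ekeland step produces an element of $\partial\varphi$ with no extra hypotheses), so the real content is maximality. Suppose $(x_{0},x_{0}^{\ast})$ satisfies the inf-condition. Replacing $\varphi$ by $\tilde\varphi(x):=\varphi(x+x_{0})-\langle x,x_{0}^{\ast}\rangle$, which is still proper convex lsc and has subdifferential $\partial\varphi(\cdot+x_{0})-x_{0}^{\ast}$, I may assume $x_{0}=0$ and $x_{0}^{\ast}=0$; the hypothesis becomes $\langle v,v^{\ast}\rangle\geq0$ for every $(v,v^{\ast})\in\partial\tilde\varphi$, and the goal becomes $\tilde\varphi(y)\geq\tilde\varphi(0)$ for every $y$, i.e.\ $(0,0)\in\partial\tilde\varphi$.

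The key auxiliary function is $\psi(x):=\tilde\varphi(x)+\tfrac12\Vert x\Vert^{2}$, which is proper convex lsc and bounded below, since any proper convex lsc function admits a continuous affine minorant. For each $\varepsilon>0$ I would apply Ekeland's variational principle with the square-root calibration $\lambda=\sqrt{\varepsilon}$ to obtain $x_{\varepsilon}\in\mathrm{Dom}(\tilde\varphi)$ with $\psi(x_{\varepsilon})\leq\inf\psi+\varepsilon$ such that $x_{\varepsilon}$ globally minimises $x\mapsto\psi(x)+\sqrt{\varepsilon}\Vert x-x_{\varepsilon}\Vert$. Writing the first-order condition via the sum rule recalled in the introduction (legitimate because $\tfrac12\Vert\cdot\Vert^{2}+\sqrt{\varepsilon}\Vert\cdot-x_{\varepsilon}\Vert$ is continuous convex) produces $x_{\varepsilon}^{\ast}\in\partial\tilde\varphi(x_{\varepsilon})$, $j_{\varepsilon}\in J(x_{\varepsilon}):=\partial(\tfrac12\Vert\cdot\Vert^{2})(x_{\varepsilon})$, and $b_{\varepsilon}^{\ast}\in\mathbb{X}^{\ast}$ with $\Vert b_{\varepsilon}^{\ast}\Vert_{\ast}\leq1$ satisfying $x_{\varepsilon}^{\ast}+j_{\varepsilon}+\sqrt{\varepsilon}\,b_{\varepsilon}^{\ast}=0$; here $\langle x_{\varepsilon},j_{\varepsilon}\rangle=\Vert x_{\varepsilon}\Vert^{2}$ and $\Vert j_{\varepsilon}\Vert_{\ast}=\Vert x_{\varepsilon}\Vert$. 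The existence of some $x_\varepsilon^\ast$ already certifies properness of $\partial\varphi$.

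Now I feed $(x_{\varepsilon},x_{\varepsilon}^{\ast})$ into the translated hypothesis: $0\leq\langle x_{\varepsilon},x_{\varepsilon}^{\ast}\rangle=-\Vert x_{\varepsilon}\Vert^{2}-\sqrt{\varepsilon}\langle x_{\varepsilon},b_{\varepsilon}^{\ast}\rangle\leq-\Vert x_{\varepsilon}\Vert^{2}+\sqrt{\varepsilon}\Vert x_{\varepsilon}\Vert$, forcing $\Vert x_{\varepsilon}\Vert\leq\sqrt{\varepsilon}$ and hence $\Vert x_{\varepsilon}^{\ast}\Vert_{\ast}\leq2\sqrt{\varepsilon}$. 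For arbitrary $y\in\mathbb{X}$ the subgradient inequality $\tilde\varphi(y)\geq\tilde\varphi(x_{\varepsilon})+\langle y-x_{\varepsilon},x_{\varepsilon}^{\ast}\rangle$ together with these bounds gives $\tilde\varphi(y)\geq\tilde\varphi(x_{\varepsilon})-2\sqrt{\varepsilon}(\Vert y\Vert+\sqrt{\varepsilon})$; letting $\varepsilon\downarrow0$ and using $x_{\varepsilon}\to0$ together with the lower semicontinuity of $\tilde\varphi$ at $0$ yields $\tilde\varphi(y)\geq\tilde\varphi(0)$, which is $(0,0)\in\partial\tilde\varphi$; undoing the translation finishes the proof. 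The main subtlety I foresee is the scaling of the Ekeland parameter: the choice $\lambda=\sqrt{\varepsilon}$ is exactly what makes $\sqrt{\varepsilon}\Vert x_{\varepsilon}\Vert$ dominate $\Vert x_{\varepsilon}\Vert^{2}$, so that the hypothesis squeezes $x_{\varepsilon}$ to $0$ while still leaving $\Vert x_{\varepsilon}^{\ast}\Vert_{\ast}$ small enough to make the perturbation in the subgradient inequality vanish; any coarser calibration loses one of these two controls.
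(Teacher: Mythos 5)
Your argument is correct and is essentially the paper's own proof: both apply Ekeland's variational principle to $\varphi$ (or its translate) plus $\tfrac12\Vert\cdot\Vert^{2}$ minus a linear term, use the continuous-convex sum rule to extract $x_{\varepsilon}^{\ast}\in\partial\varphi(x_{\varepsilon})$ together with an element of the duality mapping and an $O(\varepsilon)$ error, feed the monotonicity hypothesis back in to force $x_{\varepsilon}\to z$ and $x_{\varepsilon}^{\ast}\to z^{\ast}$, and conclude by lower semicontinuity in the subgradient inequality. Your normalization to $(z,z^{\ast})=(0,0)$ and the $(\varepsilon,\sqrt{\varepsilon})$ calibration in place of the paper's $(\varepsilon^{2},\varepsilon)$ are only cosmetic differences.
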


\begin{proof}
Using the definition of $\partial\varphi$ it is very easy to prove that
$\partial\varphi:\mathbb{X}\rightrightarrows\mathbb{X}^{\ast}$\textit{ }is a
monotone\textit{ }operator. Let us prove that $\partial\varphi$\textit{ }is a
proper maximal monotone operator.

Let $(z,z^{\ast})\in\mathbb{X}\times\mathbb{X}^{\ast}$ and $\lambda>0$ be
arbitrary fixed. Consider the function $\Psi:\mathbb{X}\rightarrow
]-\infty,+\infty]$ be defined by
\[
\Psi(x)=\dfrac{1}{2}\left\Vert x-z\right\Vert ^{2}+\lambda\varphi
(x)-\lambda\left\langle x,z^{\ast}\right\rangle .
\]
Then $\Psi$ is a proper convex lower semicontinuous function and $\Psi$ is
bounded from below. From Ekeland principle \cite{EkOT} (see also \cite[Th.
1.4.1]{z-book}, or \cite{BaMM}, p. 29, Th. 3.2), for every $\varepsilon>0$
there exists
\
$x_{\varepsilon}\in\mathbb{X}$ such that%
\begin{align}
\Psi(x_{\varepsilon})  &  \leq\inf\,\{\Psi(x):x\in\mathbb{X}\}+\varepsilon
^{2}\quad\text{and}\label{1a}\\
\Psi(x_{\varepsilon})  &  \leq\Psi(x)+\varepsilon~\left\Vert x-\mathit{\ }%
x_{\varepsilon}\right\Vert _{\mathbb{X}}~,\;\text{for all}\,x\in\mathbb{X}.
\label{1b}%
\end{align}
Remark that the sequence $\left\{  x_{\varepsilon}:0<\varepsilon\leq1\right\}
$ is bounded, since $\lim\limits_{\left\Vert x\right\Vert \rightarrow\infty
}\Psi(x)=+\infty.$ From (\ref{1b}) we deduce that%
\[
0\in\partial\Psi(x_{\varepsilon})+\varepsilon U_{\mathbb{X}^{\ast}%
}=J_{\mathbb{X}}(x_{\varepsilon}-z)+\lambda\partial\varphi(x_{\varepsilon
})-\lambda z^{\ast}+\varepsilon U_{\mathbb{X}^{\ast}}~,
\]
where $U_{\mathbb{X}^{\ast}}=\left\{  u^{\ast}\in\mathbb{X}^{\ast}:\left\Vert
u^{\ast}\right\Vert _{\mathbb{X}^{\ast}}\leq1\right\}  $ and $J_{\mathbb{X}%
}:\mathbb{X}\rightrightarrows\mathbb{X}^{\ast}$ is the duality mapping, that
is%
\[
J_{\mathbb{X}}\left(  x\right)  =\left\{  x^{\ast}\in\mathbb{X}^{\ast
}:\left\langle x,x^{\ast}\right\rangle =\left\Vert x\right\Vert ^{2}%
=\left\Vert x^{\ast}\right\Vert _{\mathbb{X}^{\ast}}^{2}\right\}
=\partial\left(  \frac{1}{2}\left\Vert \cdot\right\Vert ^{2}\right)  \left(
x\right)  .
\]
So, there exist $u_{\varepsilon}^{\ast}\in U_{X^{\ast}}$, $y_{\varepsilon
}^{\ast}\in J_{\mathbb{X}}(x_{\varepsilon}-z)$ and $x_{\varepsilon}^{\ast}%
\in\partial\varphi(x_{\varepsilon})$ (in particular $\partial\varphi$ is a
proper point-to-set operator) such that
\begin{equation}
\lambda z^{\ast}-\lambda x_{\varepsilon}^{\ast}=y_{\varepsilon}^{\ast
}+\varepsilon~u_{\varepsilon}^{\ast}~. \label{e1}%
\end{equation}
It follows that $\left\Vert \lambda z^{\ast}-\lambda x_{\varepsilon}^{\ast
}\right\Vert \leq\left\Vert y_{\varepsilon}^{\ast}\right\Vert +\varepsilon
=\left\Vert x_{\varepsilon}-z\right\Vert +\varepsilon.$

Let now $\left(  z,z^{\ast}\right)  \in\mathbb{X}\times\mathbb{X}^{\ast}$ such
that $\left\langle z-x,z^{\ast}-x^{\ast}\right\rangle \geq0$ for all $\left(
x,x^{\ast}\right)  $ $\in\partial\varphi.$ Then%
\[%
\begin{array}
[c]{ll}%
0\leq\left\langle z-x_{\varepsilon},z^{\ast}-x_{\varepsilon}^{\ast
}\right\rangle  & =\left\langle z-x_{\varepsilon},y_{\varepsilon}^{\ast
}\right\rangle +\varepsilon\left\langle z-x_{\varepsilon},u_{\varepsilon
}^{\ast}\right\rangle \medskip\\
& \leq-\left\Vert x_{\varepsilon}-z\right\Vert ^{2}+\varepsilon\left\Vert
x_{\varepsilon}-z\right\Vert .
\end{array}
\]
Hence $\left\Vert x_{\varepsilon}-z\right\Vert \leq\varepsilon$ and
$\left\Vert x_{\varepsilon}^{\ast}-z^{\ast}\right\Vert \leq\dfrac
{2\varepsilon}{\lambda}$; in particular $x_{\varepsilon}\rightarrow z$ and
$x_{\varepsilon}^{\ast}\rightarrow z^{\ast}$ as $\varepsilon\rightarrow0$.
Passing to $\liminf_{\varepsilon\rightarrow0}$ in
\[
\left\langle x_{\varepsilon}^{\ast},y-x_{\varepsilon}\right\rangle
+\varphi(x_{\varepsilon})\leq\varphi(y),\quad\forall\,y\in\mathbb{X},
\]
we obtain $\left(  z,z^{\ast}\right)  \in\partial\varphi$. \bigskip\hfill
\end{proof}

\bigskip

From this proof (the equality (\ref{e1}) corresponding to $z=0$) we deduce a
Rockafellar's type surjectivity result in general Banach spaces:

\begin{corollary}
\label{c1}If $\mathbb{X}$ is a Banach space and $\varphi:\mathbb{X}%
\rightarrow]-\infty,+\infty]$ is a proper convex lower semicontinuous function
then for all $\lambda>0,$%
\[
\overline{R\left(  J_{\mathbb{X}}+\lambda\partial\varphi\right)  }%
=\mathbb{X}^{\ast}.
\]
If $\mathbb{X}$ is a reflexive Banach space, then%
\[
R\left(  J_{\mathbb{X}}+\lambda\partial\varphi\right)  =\mathbb{X}^{\ast}.
\]

\end{corollary}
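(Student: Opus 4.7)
\bigskip
\noindent\textbf{Proof plan.} My plan is to extract both assertions directly from the Ekeland--based construction already developed in the proof of the Theorem, specialized to $z=0$.

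For the density statement in a general Banach space, I would fix $z^{*}\in\mathbb{X}^{*}$ and $\lambda>0$ and rerun the argument of the Theorem \emph{verbatim} with $z=0$. Equation (\ref{e1}) then rewrites as
\[
\lambda z^{*}-\varepsilon u_{\varepsilon}^{*}=y_{\varepsilon}^{*}+\lambda x_{\varepsilon}^{*}\in J_{\mathbb{X}}(x_{\varepsilon})+\lambda\partial\varphi(x_{\varepsilon})\subseteq R(J_{\mathbb{X}}+\lambda\partial\varphi),
\]
so that $\lambda z^{*}\in R(J_{\mathbb{X}}+\lambda\partial\varphi)+\varepsilon U_{\mathbb{X}^{*}}$ for every $\varepsilon>0$. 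Letting $\varepsilon\to 0$ yields $\lambda z^{*}\in\overline{R(J_{\mathbb{X}}+\lambda\partial\varphi)}$; since $z^{*}\in\mathbb{X}^{*}$ and $\lambda>0$ are arbitrary, this already closes the first assertion.

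For the reflexive case I would drop the Ekeland perturbation and minimize the auxiliary function directly. With $z=0$, set
\[
\Psi(x)=\tfrac{1}{2}\|x\|^{2}+\lambda\varphi(x)-\lambda\langle x,z^{*}\rangle.
\]
As already noted in the Theorem's proof, $\Psi$ is proper, convex, lower semicontinuous and coercive. In a reflexive Banach space the sublevel sets of such a $\Psi$ are weakly compact (Banach--Alaoglu plus reflexivity) and $\Psi$ is weakly lower semicontinuous, so $\Psi$ attains its infimum at some $x_{0}\in\mathbb{X}$. Applying Fermat's rule together with the subdifferential sum rule invoked at the end of Section~1 (legal because $\tfrac{1}{2}\|\cdot\|^{2}-\lambda\langle\cdot,z^{*}\rangle$ is continuous convex), I get $0\in\partial\Psi(x_{0})=J_{\mathbb{X}}(x_{0})+\lambda\partial\varphi(x_{0})-\lambda z^{*}$, whence $\lambda z^{*}\in R(J_{\mathbb{X}}+\lambda\partial\varphi)$, and the surjectivity follows since $\lambda>0$ and $z^{*}$ were arbitrary.

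The only genuine obstacle is the attainment of $\min\Psi$: in an arbitrary Banach space the coercivity of $\Psi$ only produces bounded \emph{approximate} minimizers via Ekeland, which is exactly why the first part gives only density; reflexivity is therefore not cosmetic but precisely what allows one to replace $\varepsilon$-minimization by genuine minimization and drop the closure. All the remaining ingredients---the identification $\partial(\tfrac{1}{2}\|\cdot\|^{2})=J_{\mathbb{X}}$, the continuity-based sum rule, and the coercivity estimate for $\Psi$---are already at our disposal from the Theorem and its proof.
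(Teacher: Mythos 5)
Your proposal is correct and follows essentially the same route as the paper: the density statement is extracted from equation (\ref{e1}) with $z=0$ exactly as in the paper's proof. The only cosmetic difference is in the reflexive case, where you produce the minimizer $x_{0}$ of $\Psi$ directly via weak compactness of sublevel sets and weak lower semicontinuity, whereas the paper obtains $x_{0}$ as a weak limit of the Ekeland approximate minimizers $x_{\varepsilon_{n}}$; both arguments then conclude identically from $0\in\partial\Psi(x_{0})=J_{\mathbb{X}}(x_{0})+\lambda\partial\varphi(x_{0})-\lambda z^{\ast}$.
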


\begin{proof}
From (\ref{e1}) with $z=0$ we deduce that $R\left(  \partial\varphi+\lambda
J_{\mathbb{X}}\right)  +\varepsilon U_{\mathbb{X}^{\ast}}=\mathbb{X}^{\ast}.$
Hence%
\[
\overline{R\left(  J_{\mathbb{X}}+\lambda\partial\varphi\right)  }%
=\bigcap_{\varepsilon>0}\left[  R\left(  J_{\mathbb{X}}+\lambda\partial
\varphi\right)  +\varepsilon U_{\mathbb{X}^{\ast}}\right]  =\mathbb{X}^{\ast
}.
\]
If $\mathbb{X}$ is a reflexive Banach space, then the boundedness of $\left\{
x_{\varepsilon}:0<\varepsilon\leq1\right\}  $ yields that there exists a
subsequence $\varepsilon_{n}\rightarrow0$ such that $x_{\varepsilon_{n}%
}\rightharpoonup x_{0}$ (weakly on $\mathbb{X}$) and%
\[
\Psi(x_{0})\leq\liminf_{\varepsilon_{n}\rightarrow0}\Psi(x_{\varepsilon_{n}%
})=\inf\,\{\Psi(x):x\in\mathbb{X}\}.
\]
Hence $0\in\partial\Psi(x_{0})=J_{\mathbb{X}}(x_{0})+\lambda\partial
\varphi(x_{0})-\lambda z^{\ast}$ that is $\lambda z^{\ast}\in J_{\mathbb{X}%
}(x_{0})+\lambda\partial\varphi(x_{0}).$ Hence $\mathbb{X}^{\ast}%
=\lambda\mathbb{X}^{\ast}\subset R\left(  J_{\mathbb{X}}+\lambda
\partial\varphi\right)  \subset\mathbb{X}^{\ast}.$ \bigskip\hfill
\end{proof}

\begin{definition}
Given a monotone operator $A:\mathbb{X}\rightrightarrows\mathbb{X}^{\ast}$,
the associated Fitzpatrick function is defined as $\mathcal{H}=\mathcal{H}%
_{A}:\mathbb{X}\times\mathbb{X}^{\ast}\rightarrow\mathbb{]-}\infty,+\infty]$,%
\begin{equation}%
\begin{array}
[c]{ll}%
\mathcal{H}\left(  x,x^{\ast}\right)   &
\xlongequal{\hspace{-4pt}{\rm def}\hspace{-4pt}}\left\langle x,x^{\ast
}\right\rangle -\inf\left\{  \left\langle x-u,x^{\ast}-u^{\ast}\right\rangle
:\left(  u,u^{\ast}\right)  \in A\right\}  \smallskip\\
& =\sup\left\{  \left\langle u,x^{\ast}\right\rangle +\left\langle x,u^{\ast
}\right\rangle -\left\langle u,u^{\ast}\right\rangle :\left(  u,u^{\ast
}\right)  \in A\right\}
\end{array}
\label{F-def}%
\end{equation}

\end{definition}

\noindent Clearly $\mathcal{H}\left(  x,x^{\ast}\right)  =\left\langle
x,x^{\ast}\right\rangle ,\ $for all $(x,x^{\ast})\in A$ and%
\[
\mathcal{H}=\mathcal{H}_{A}:\mathbb{X}\times\mathbb{X}^{\ast}\rightarrow
\mathbb{]-}\infty,+\infty]\quad\text{{\normalsize is a proper convex l.s.c.
function.}}%
\]
Let $\left(  x^{\ast},x\right)  \in\partial\mathcal{H}\left(  u,u^{\ast
}\right)  .$ Then, from the definition of a subdifferential operator this
means that%
\[
\left\langle \left(  x^{\ast},x\right)  ,\left(  z,z^{\ast}\right)  -\left(
u,u^{\ast}\right)  \right\rangle +\mathcal{H}\left(  u,u^{\ast}\right)
\leq\mathcal{H}\left(  z,z^{\ast}\right)  ,\ \forall~\left(  z,z^{\ast
}\right)  \in\mathbb{X}\times\mathbb{X}^{\ast},
\]
or, equivalently,%
\begin{equation}%
\begin{array}
[c]{l}%
\left\langle u-x,u^{\ast}-x^{\ast}\right\rangle -\inf\limits_{\left(
y,y^{\ast}\right)  \in A}\left\langle u-y,u^{\ast}-y^{\ast}\right\rangle
\medskip\\
\leq\left\langle z-x,z^{\ast}-x^{\ast}\right\rangle -\inf\limits_{\left(
y,y^{\ast}\right)  \in A}\left\langle z-y,z^{\ast}-y^{\ast}\right\rangle
,\quad\forall~\left(  z,z^{\ast}\right)  \in\mathbb{X}\times\mathbb{X}^{\ast}.
\end{array}
\label{F3}%
\end{equation}
Since the operator $A$ is maximal monotone, then%
\[
\inf\limits_{\left(  y,y^{\ast}\right)  \in A}\left\langle u-y,u^{\ast
}-y^{\ast}\right\rangle \leq0
\]
and%
\[
\inf\limits_{\left(  y,y^{\ast}\right)  \in A}\left\langle z-y,z^{\ast
}-y^{\ast}\right\rangle =0,\ \forall~\left(  z,z^{\ast}\right)  \in A;
\]
consequently, we have%
\begin{equation}
\left(  x^{\ast},x\right)  \in\partial\mathcal{H}\left(  u,u^{\ast}\right)
\ \Longrightarrow\ \left\langle u-x,u^{\ast}-x^{\ast}\right\rangle \leq
\inf\limits_{\left(  z,z^{\ast}\right)  \in A}\left\langle z-x,z^{\ast
}-x^{\ast}\right\rangle \text{.}\label{F2}%
\end{equation}
Also, by the monotonicity of $A$, from (\ref{F3}) follows%
\[
\left(  x^{\ast},x\right)  \in A\ \Longrightarrow\ \left(  x^{\ast},x\right)
\in\partial\mathcal{H}\left(  x,x^{\ast}\right)  .
\]

\noindent Hence, if $A:\mathbb{X}\rightrightarrows\mathbb{X}^{\ast}$ is a
maximal monotone operator then $\mathcal{H}_{A}$ characterizes $A$ as follows.

\begin{theorem}
[Fitzpatrick](see Fitzpatrick \cite{F88} and Simons-Z\u{a}linescu \cite{SZ})
Let $\mathbb{X}$ be a Banach space, $A:\mathbb{X}\rightrightarrows
\mathbb{X}^{\ast}$ be a maximal monotone operator and $\mathcal{H}$ its
associated Fitzpatrick function. Then, for all $(x,x^{\ast})\in\mathbb{X}%
\times\mathbb{X}^{\ast}$%
\[
\mathcal{H}(x,x^{\ast})\geq\left\langle x,x^{\ast}\right\rangle .
\]
Moreover, the following assertions are equivalent%
\[%
\begin{array}
[c]{ll}%
\left(  a\right)  & \displaystyle(x,x^{\ast})\in A;\medskip\\
\left(  b\right)  & \displaystyle\mathcal{H}(x,x^{\ast})=\left\langle
x,x^{\ast}\right\rangle ;\medskip\\
\left(  c\right)  & \displaystyle\exists~\left(  u,u^{\ast}\right)  \in
Dom\left(  \partial\mathcal{H}\right)  \text{ such that}\medskip\\
& \displaystyle\quad\quad\quad\quad\quad\left(  x^{\ast},x\right)  \in
\partial\mathcal{H}\left(  u,u^{\ast}\right)  \ \text{and}\ \left\langle
u-x,u^{\ast}-x^{\ast}\right\rangle \geq0;\medskip\\
\left(  d\right)  & \left(  x^{\ast},x\right)  \in\partial\mathcal{H}\left(
x,x^{\ast}\right)  .
\end{array}
\]

\end{theorem}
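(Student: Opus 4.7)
The plan is to exploit everything already derived in the paragraph that immediately precedes the statement: the fact that $\mathcal{H}$ is proper convex l.s.c., that $\mathcal{H}(x,x^{\ast})=\langle x,x^{\ast}\rangle$ on $A$, the implication (\ref{F2}), and the already-recorded implication $(x,x^{\ast})\in A\Longrightarrow(x^{\ast},x)\in\partial\mathcal{H}(x,x^{\ast})$. With these in hand, the only genuinely new input will be the contrapositive formulation of maximality, namely that $(x,x^{\ast})\notin A$ produces some $(u,u^{\ast})\in A$ with $\langle x-u,x^{\ast}-u^{\ast}\rangle<0$.

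First I would establish the inequality $\mathcal{H}(x,x^{\ast})\geq\langle x,x^{\ast}\rangle$. By (\ref{F-def}), it is equivalent to
\[
\inf\bigl\{\langle x-u,x^{\ast}-u^{\ast}\rangle:(u,u^{\ast})\in A\bigr\}\leq 0.
\]
If $(x,x^{\ast})\in A$, monotonicity of $A$ plus the choice $(u,u^{\ast})=(x,x^{\ast})$ force this infimum to be exactly $0$. If $(x,x^{\ast})\notin A$, the contrapositive of maximality makes the infimum strictly negative. The same dichotomy immediately yields (a)$\Leftrightarrow$(b): equality means the infimum equals $0$, hence $\langle x-u,x^{\ast}-u^{\ast}\rangle\geq 0$ for every $(u,u^{\ast})\in A$, and one more appeal to maximality concludes.

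Next I would close the cycle (a)$\Rightarrow$(d)$\Rightarrow$(c)$\Rightarrow$(a). The implication (a)$\Rightarrow$(d) is already in the text, as a direct consequence of (\ref{F3}) and monotonicity. For (d)$\Rightarrow$(c) one simply takes $(u,u^{\ast})=(x,x^{\ast})$, so that the required inequality $\langle u-x,u^{\ast}-x^{\ast}\rangle\geq 0$ holds trivially with equality. Finally, for (c)$\Rightarrow$(a), the hypothesis combined with (\ref{F2}) produces
\[
0\leq\langle u-x,u^{\ast}-x^{\ast}\rangle\leq\inf_{(z,z^{\ast})\in A}\langle z-x,z^{\ast}-x^{\ast}\rangle,
\]
and a last application of maximality, this time to the pair $(x,x^{\ast})$, delivers $(x,x^{\ast})\in A$.

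The main (and essentially only) obstacle is recognising that the contrapositive of maximality is exactly the right tool to force a strict inequality when $(x,x^{\ast})\notin A$; it is invoked three times, namely for the $\geq$ inequality, for (b)$\Rightarrow$(a), and for (c)$\Rightarrow$(a). Everything else is formal bookkeeping: the identity (\ref{F-def}), the subdifferential inequality (\ref{F3})/(\ref{F2}), and the tautological choice $(u,u^{\ast})=(x,x^{\ast})$ for (d)$\Rightarrow$(c). No further variational machinery is required, since the delicate content was already absorbed into the derivation of (\ref{F2}).
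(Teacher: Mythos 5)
Your proposal is correct and follows exactly the chain $(b)\Leftrightarrow(a)\Rightarrow(d)\Rightarrow(c)\Rightarrow(a)$ that the paper asserts (without details) in its one-line proof, filling in each step using the machinery (\ref{F-def}), (\ref{F2}), (\ref{F3}) already developed in the preceding paragraph. The only substantive ingredient you add — the contrapositive of maximality to get a strictly negative infimum when $(x,x^{\ast})\notin A$ and to close $(b)\Rightarrow(a)$ and $(c)\Rightarrow(a)$ — is precisely what the paper's "it's not difficult to show" is relying on.
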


\begin{proof}
It's not difficult to show that $\left(  b\right)  \Leftrightarrow\left(
a\right)  \Rightarrow\left(  d\right)  \Rightarrow\left(  c\right)
\Rightarrow\left(  a\right)  $. \medskip\hfill
\end{proof}

\begin{corollary}
\label{c2}Let $\mathbb{X}$ be a Banach space and $A:\mathbb{X}%
\rightrightarrows\mathbb{X}^{\ast}$ be a maximal monotone operator. Then%
\[
0\in R\left(  J_{\mathbb{X}}+A\right)  \quad\Longleftrightarrow\quad\left(
0,0\right)  \in R\left(  J_{\mathbb{X}}\otimes J_{\mathbb{X}}^{-1}%
+\partial\mathcal{H}_{A}\right)
\]

\end{corollary}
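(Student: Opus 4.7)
The plan is to unpack both range conditions in terms of their witnesses and play Theorem 5 (Fitzpatrick) against the defining identities of the duality map. The condition $0\in R(J_{\mathbb{X}}+A)$ means there exist $u\in\mathbb{X}$ and $u^{\ast}\in A(u)$ with $-u^{\ast}\in J_{\mathbb{X}}(u)$; since the relation $\langle u,-u^{\ast}\rangle=\|u\|^{2}=\|u^{\ast}\|^{2}$ is symmetric, it is equivalent to $u^{\ast}\in J_{\mathbb{X}}(-u)$, i.e.\ $-u\in J_{\mathbb{X}}^{-1}(u^{\ast})$. The other condition unpacks to the existence of $(u,u^{\ast})\in\mathbb{X}\times\mathbb{X}^{\ast}$, $v^{\ast}\in J_{\mathbb{X}}(u)$, $v\in J_{\mathbb{X}}^{-1}(u^{\ast})$ (so $u^{\ast}\in J_{\mathbb{X}}(v)$), and $(x^{\ast},x)\in\partial\mathcal{H}_{A}(u,u^{\ast})$ with $x^{\ast}=-v^{\ast}$ and $x=-v$.

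For the direction $\Rightarrow$ I would apply the implication $(a)\Rightarrow(d)$ of Theorem 5 to $(u,u^{\ast})\in A$ to obtain $(u^{\ast},u)\in\partial\mathcal{H}_{A}(u,u^{\ast})$. Combined with the pair $(-u^{\ast},-u)\in J_{\mathbb{X}}(u)\times J_{\mathbb{X}}^{-1}(u^{\ast})$ identified above, the two sum to $(0,0)$, delivering the required range condition.

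For the direction $\Leftarrow$ I would feed $(x^{\ast},x)=(-v^{\ast},-v)$ into inequality (\ref{F2}); after substitution it reads
\[
\langle u+v,u^{\ast}+v^{\ast}\rangle\leq\inf\{\langle z+v,z^{\ast}+v^{\ast}\rangle:(z,z^{\ast})\in A\}.
\]
The central computation---and the main obstacle I expect---is the Cauchy--Schwarz bookkeeping on the left-hand side: the duality identities $\langle u,v^{\ast}\rangle=\|u\|^{2}=\|v^{\ast}\|^{2}$ and $\langle v,u^{\ast}\rangle=\|v\|^{2}=\|u^{\ast}\|^{2}$ cancel the mixed terms, and bounding $\langle u,u^{\ast}\rangle,\langle v,v^{\ast}\rangle\geq -\|u\|\|u^{\ast}\|$ produces the perfect square $(\|u\|-\|u^{\ast}\|)^{2}\geq 0$. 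Hence the infimum is non-negative, and maximal monotonicity of $A$ forces $(x,x^{\ast})=(-v,-v^{\ast})\in A$. Taking $(z,z^{\ast})=(x,x^{\ast})$ then shows the infimum equals $0$, so the equality case pins down $\|u\|=\|u^{\ast}\|=\|v\|=\|v^{\ast}\|$ and $\langle v,v^{\ast}\rangle=-\|v\|^{2}$, which reads precisely $v^{\ast}\in J_{\mathbb{X}}(-v)$. Setting $w=-v$ and $w^{\ast}=-v^{\ast}$ we obtain $w^{\ast}\in A(w)$ and $-w^{\ast}\in J_{\mathbb{X}}(w)$, whence $0\in R(J_{\mathbb{X}}+A)$.
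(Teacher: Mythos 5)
Your proof is correct, and it follows the same route as the paper (everything is funneled through Theorem~5 and the Fitzpatrick function), but it is genuinely more complete on the direction $\Leftarrow$. The paper's proof is a one-line chain of equivalences whose last step is asserted as clear; in fact, a witness for $(0,0)\in R\left(J_{\mathbb{X}}\otimes J_{\mathbb{X}}^{-1}+\partial\mathcal{H}_{A}\right)$ only provides some $(u,u^{\ast})$, some $v^{\ast}\in J_{\mathbb{X}}(u)$, $v\in J_{\mathbb{X}}^{-1}(u^{\ast})$ and $(-v^{\ast},-v)\in\partial\mathcal{H}_{A}(u,u^{\ast})$, with no a priori reason for $(v,v^{\ast})$ to coincide with $(u,u^{\ast})$ as the paper's intermediate condition tacitly assumes. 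You correctly identify this as the real content of the corollary and supply the missing argument: the duality identities $\left\langle u,v^{\ast}\right\rangle=\left\Vert u\right\Vert^{2}=\left\Vert v^{\ast}\right\Vert^{2}$ and $\left\langle v,u^{\ast}\right\rangle=\left\Vert v\right\Vert^{2}=\left\Vert u^{\ast}\right\Vert^{2}$ plus Cauchy--Schwarz give $\left\langle u+v,u^{\ast}+v^{\ast}\right\rangle\geq\left(\left\Vert u\right\Vert-\left\Vert u^{\ast}\right\Vert\right)^{2}\geq0$, so (\ref{F2}) (equivalently, the implication $(c)\Rightarrow(a)$ of Theorem~5) forces $(-v,-v^{\ast})\in A$, and the equality-case analysis then yields $v^{\ast}\in J_{\mathbb{X}}(-v)$, hence $0\in J_{\mathbb{X}}(-v)+A(-v)$. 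The forward direction via $(a)\Rightarrow(d)$ matches the paper exactly. In short: same skeleton, but your write-up proves the implication the paper only states, and is the version one would actually want to read.
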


\begin{proof}
Since $J_{\mathbb{X}}\left(  -x\right)  =-J_{\mathbb{X}}\left(  x\right)  ,$
then we clearly have the following equivalences: $0\in R\left(  J_{\mathbb{X}%
}+A\right)  $ $\Longleftrightarrow$ $\exists~\left(  x,x^{\ast}\right)  \in A$
such that $-x^{\ast}\in J_{\mathbb{X}}\left(  x\right)  $ $\Longleftrightarrow
$ $\exists~\left(  x,x^{\ast}\right)  \in\mathbb{X}\times\mathbb{X}^{\ast}$
such that $\left(  0,0\right)  \in\left(  -x^{\ast},-x\right)  +\partial
\mathcal{H}\left(  x,x^{\ast}\right)  $ and $\left(  -x^{\ast},-x\right)
\in\left(  J_{\mathbb{X}}\left(  x\right)  ,J_{\mathbb{X}}^{-1}\left(
x^{\ast}\right)  \right)  $ $\Longleftrightarrow$ $\left(  0,0\right)  \in
R\left(  J_{\mathbb{X}}\otimes J_{\mathbb{X}}^{-1}+\partial\mathcal{H}%
_{A}\right)  .$ \bigskip\hfill
\end{proof}

Now from Corollary \ref{c1} we have for all $\lambda>0,$%
\[
\overline{R\left(  J_{\mathbb{X}\times\mathbb{X}^{\ast}}+\lambda
\partial\mathcal{H}_{A}\right)  }=\mathbb{X}^{\ast}\times\mathbb{X}^{\ast\ast}%
\]
When $\mathbb{X}$ is a reflexive Banach space $J_{\mathbb{X}\times
\mathbb{X}^{\ast}}\left(  x,x^{\ast}\right)  =J_{\mathbb{X}}\left(  x\right)
\otimes J_{\mathbb{X}}^{-1}\left(  x^{\ast}\right)  $ and therefore, by
Corollary \ref{c1},%
\begin{equation}
R\left(  J_{\mathbb{X}}\otimes J_{\mathbb{X}}^{-1}+\lambda\partial
\mathcal{H}_{A}\right)  =\mathbb{X}^{\ast}\times\mathbb{X} \label{s1}%
\end{equation}
In this sequence of ideas we can rewrite simplifying the approach from
\cite{SZ} of Simons and Z\u{a}linescu for the Rockafellar's surjectivity
result of maximal monotone operators, as follows

\begin{theorem}
[Rockafellar]Let $\mathbb{X}$ be a reflexive Banach space. If $A:\mathbb{X}%
\rightrightarrows\mathbb{X}^{\ast}$ is a maximal monotone operator, then
$R\left(  J_{\mathbb{X}}+\lambda A\right)  =\mathbb{X}^{\ast},$ for all
$\lambda>0.$
\end{theorem}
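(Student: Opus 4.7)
The strategy is to reduce the general surjectivity claim ``$z^{\ast}\in R(J_{\mathbb{X}}+\lambda A)$ for every $z^{\ast}\in\mathbb{X}^{\ast}$'' to the special case ``$0\in R(J_{\mathbb{X}}+B)$ for a suitable maximal monotone $B$,'' and then to read off that special case from Corollary \ref{c2} combined with the already-established identity (\ref{s1}).

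First I would fix arbitrary $z^{\ast}\in\mathbb{X}^{\ast}$ and $\lambda>0$, and introduce the translated-and-rescaled operator $B:\mathbb{X}\rightrightarrows\mathbb{X}^{\ast}$ defined by $B(x)=\lambda A(x)-z^{\ast}$. Because positive scalar multiples and constant translations preserve maximal monotonicity, $B$ is maximal monotone, and by construction
\[
z^{\ast}\in R(J_{\mathbb{X}}+\lambda A)\ \Longleftrightarrow\ 0\in R(J_{\mathbb{X}}+B).
\]

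Next I would apply Corollary \ref{c2} to $B$, giving
\[
0\in R(J_{\mathbb{X}}+B)\ \Longleftrightarrow\ (0,0)\in R\bigl(J_{\mathbb{X}}\otimes J_{\mathbb{X}}^{-1}+\partial\mathcal{H}_{B}\bigr).
\]
Since $\mathbb{X}$ is reflexive, $\mathbb{X}\times\mathbb{X}^{\ast}$ is also reflexive and its duality mapping factors as $J_{\mathbb{X}\times\mathbb{X}^{\ast}}=J_{\mathbb{X}}\otimes J_{\mathbb{X}}^{-1}$. The Fitzpatrick function $\mathcal{H}_{B}$ is proper, convex and l.s.c.\ on $\mathbb{X}\times\mathbb{X}^{\ast}$, so formula (\ref{s1}), applied to $B$ in place of $A$ with $\lambda=1$, yields
\[
R\bigl(J_{\mathbb{X}}\otimes J_{\mathbb{X}}^{-1}+\partial\mathcal{H}_{B}\bigr)=\mathbb{X}^{\ast}\times\mathbb{X}.
\]
In particular $(0,0)$ belongs to this range, hence $0\in R(J_{\mathbb{X}}+B)$, i.e.\ $z^{\ast}\in R(J_{\mathbb{X}}+\lambda A)$. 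Since $z^{\ast}$ was arbitrary, this gives $R(J_{\mathbb{X}}+\lambda A)=\mathbb{X}^{\ast}$.

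I expect the only real obstacle to be conceptual rather than computational: verifying (a) that the shifted/scaled operator $B=\lambda A-z^{\ast}$ is itself maximal monotone, and (b) that the product-space identification $J_{\mathbb{X}\times\mathbb{X}^{\ast}}=J_{\mathbb{X}}\otimes J_{\mathbb{X}}^{-1}$ is legitimate so that (\ref{s1}) can be invoked for $B$. Both are standard in reflexive Banach spaces, and once they are in place the surjectivity follows immediately, with no further estimation or limiting argument required — all the hard analytic work has been absorbed into Corollary \ref{c1}, which supplied (\ref{s1}) via Ekeland's variational principle.
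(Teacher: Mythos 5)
Your proposal is correct and follows essentially the same route as the paper: reduce to the case $z^{\ast}=0$, $\lambda=1$ via the maximal monotone operator $\lambda A-z^{\ast}$, then combine Corollary \ref{c2} with the identity (\ref{s1}) applied to that operator. If anything, you are slightly more careful than the paper's own wording, which after introducing $\tilde{A}=\lambda A-z^{\ast}$ writes $0\in R\left(J_{\mathbb{X}}+A\right)$ where $\tilde{A}$ is meant; your explicit application of Corollary \ref{c2} and (\ref{s1}) to $B$ rather than to $A$ removes that ambiguity.
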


\begin{proof}
Let $\lambda>0$ and $z^{\ast}\in\mathbb{X}^{\ast}.$ Since $A$ is maximal
monotone if and only $\tilde{A}=\lambda A-z^{\ast}$ is maximal monotone, then
to prove $z^{\ast}\in R\left(  J_{\mathbb{X}}+\lambda A\right)  $ is
equivalent to prove $0\in R\left(  J_{\mathbb{X}}+A\right)  .\ $But by
(\ref{s1}) and Corollary \ref{c2} the relation $0\in R\left(  J_{\mathbb{X}%
}+A\right)  $ is equivalent to the true assertion $\left(  0,0\right)  \in
R\left(  J_{\mathbb{X}}\otimes J_{\mathbb{X}}^{-1}+\partial\mathcal{H}%
_{A}\right)  =\mathbb{X}^{\ast}\times\mathbb{X}$. \bigskip\hfill
\end{proof}

Finally we note (see \cite{gossez}) that in the case of a non-reflexive Banach
space $\mathbb{X}$, there exists a maximal monotone operator $A:\mathbb{X}%
\rightrightarrows\mathbb{X}^{\ast}$ and $\lambda>0$ such that $\overline
{R\left(  J_{\mathbb{X}}+\lambda A\right)  }\subsetneq\mathbb{X}^{\ast}$.

\addcontentsline{toc}{section}{References}

\end{document}